\newcommand{\undertilde}[1]{\ensuremath{\mathord{\vtop{\ialign{##\crcr
   $\hfil\displaystyle{#1}\hfil$\crcr\noalign{\kern1.5pt\nointerlineskip}
   $\hfil\tilde{}\hfil$\crcr\noalign{\kern1.5pt}}}}}}
\begin{document}

\title{On the Determinant and its Derivatives of the Rank-one Corrected Generator of a Markov Chain on a Graph.}
\author{J.A. Filar, M. Haythorpe, W. Murray}
\institute{J.A. Filar
\at Flinders University\\
\email{jerzy.filar@flinders.edu.au}
\and
M. Haythorpe
\at Flinders University\\
\email{michael.haythorpe@flinders.edu.au}
\and
W. Murray
\at Stanford University\\
\email{walter@stanford.edu}
}
\maketitle {\abstract We present an algorithm to find the determinant and its
first and second derivatives of a
rank-one corrected generator matrix of a doubly stochastic Markov chain.
The motivation arises from the fact that the global minimiser of this
determinant solves the Hamiltonian cycle problem. It is essential for
algorithms that find global minimisers to evaluate both first and second
derivatives at every iteration. Potentially the computation of these
derivatives could require an overwhelming amount of work since  for the
Hessian $N^2$ cofactors are required. We show how the  doubly stochastic
structure and the properties of the objective may be exploited to calculate all cofactors from a single LU
decomposition.}

\vspace*{-1cm}
\section{Introduction}\label{sec-Introduction}
\vspace*{-0.5cm}

The {\em Hamiltonian cycle problem} (HCP) is an important graph theory problem that features prominently in
complexity theory because it is {\em NP-complete} \cite{garey1976}.
HCP has also gained recognition because two
special cases: the {\em Knight's tour} and the {\em Icosian game}, were
solved by Euler and Hamilton,
respectively. Finally, HCP is closely related to the well known Traveling Salesman problem.

The definition of HCP is the following: given a graph $\Gamma$ containing $N$ nodes,
determine whether any simple cycles of length $N$ exist in the graph.
These simple cycles of length $N$ are known as {\em Hamiltonian cycles}.
If $\Gamma$ contains at least one Hamiltonian cycle (HC), we say that
$\Gamma$ is a {\em Hamiltonian graph}. Otherwise, we say that $\Gamma$
is a {\em non-Hamiltonian graph}.

While there are many graph theory techniques that have been designed to solve HCP, another common approach is to
associate a variable $x_{ij}$ with each arc $(i,j) \in \Gamma$, and solve an associated optimisation problem. A convenient method of representing these constraints is to use a matrix $P({x})$, where
\begin{eqnarray}p_{ij}({x}) & = & \left\{\begin{array}{lcl}x_{ij}, & & (i,j) \in \Gamma,\\
0, & & \mbox{otherwise.}\end{array}\right.\nonumber\end{eqnarray}
The discrete nature of HCP naturally lends itself to integer programming optimisation problems. However, arising from an embedding of HCP in a Markov decision process (a practice initiated by Filar et al \cite{krass}), continuous optimisation problems that are equivalent to HCP have been discovered in recent times. In particular, it was demonstrated in \cite{detpaper} that if we define
\begin{eqnarray}A(P({x})) & = & I - P({x}) + \frac{1}{N}ee^T,\label{eq-APx}\end{eqnarray}
where $e$ is a column vector with unit entries, then HCP is equivalent to solving the following optimisation problem:
$$\min \quad-\det(A(P({x})))$$
subject to
\begin{eqnarray}\sum\limits_{j \in \mathcal{A}(i)}x_{ij} & = & 1, \quad i = 1, \hdots, N,\label{eq-DS1}\\
\sum\limits_{i \in \mathcal{A}(j)}x_{ij} & = & 1, \quad j = 1, \hdots, N,\label{eq-DS2}\\
x_{ij} & \geq & 0, \quad\forall (i,j) \in \Gamma,\label{eq-DS3}\end{eqnarray}
where $\mathcal{A}(i)$ is the set of nodes reachable in one step from node $i$. Constraints (\ref{eq-DS1})--(\ref{eq-DS3}) are called the {\em doubly-stochastic} constraints. For neatness, we refer to constraints (\ref{eq-DS1})--(\ref{eq-DS3}) as the set $\mathcal{DS}$, and we call the objective function $f(P({x}))$. Then, the above problem can be represented as follows:

\begin{eqnarray}
\min \{f(P({x}))\;\;\; | \;\;\;{x} \in \mathcal{DS}\}.\label{eq-problem_constrained}
\end{eqnarray}

Note that we need to find a global minimiser and, typically, there are many of them. However, the number of global minimisers is typically extremely small compared to the number of local minimisers. One consequence of multiple global minimisers is that there is a similarly large number of stationary points. To distinguish such stationary points from minimisers, algorithms (see for example \cite{mike}) require the use of second derivatives. It is not hard to appreciate that evaluating such derivatives will be expensive even for moderately-sized problems unless some special structure is identified. We exploit the structure of the Hessian and the fact the points at which it is evaluated are in $\mathcal{DS}$. We assume an algorithm to solve this problem starts at a feasible point and all iterates remain feasible. For problems with nonlinear objectives this is almost always the best approach. Finding a point in $\mathcal{DS}$ is simple and does not involve evaluating the objective or its derivatives.

\vspace*{-0.85cm}
\section{Preliminary results}\label{sec-Preliminary results}
\vspace*{-0.5cm}
Our work is based on some key properties of the LU factorisation of the matrix $I - P$. We adopt the following common notation. As noted we denote a vector of unit elements by $e$. In a given proof $e$ will still be used even when different occurrences may be of different dimension since its dimension can be inferred. The vector $e_j$ has all zero elements except the $j$th, which is unity. We will often denote the $(i, j)$th elements of a matrix, say $W$, by $w_{ij}$.

\begin{definition}A matrix $G$ is said to have property $S$ if
\begin{eqnarray}
    g_{ii} & \geq & 0, \quad \forall i = 1, \hdots, N,\nonumber\\
    g_{ij} & \leq & 0, \quad \forall i \neq j,\nonumber\\
    Ge = G^Te & = & 0.\nonumber\end{eqnarray}
\end{definition}
\begin{theorem}
    If $G$ has property $S$ the LU decomposition of $G$ exists.
\end{theorem}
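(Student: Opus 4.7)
The plan is to induct on the order $N$ of $G$. The base case $N=1$ is immediate: property $S$ forces $g_{11}\geq 0$ together with $Ge = g_{11} = 0$, so $G=(0)$ and the trivial factorisation $L=(1)$, $U=(0)$ suffices. For $N\geq 2$, I would perform a single step of Gaussian elimination on $G$ and argue that the resulting Schur complement again satisfies property $S$, so the inductive hypothesis applies to it directly.

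The inductive step splits according to the first pivot. If $g_{11}>0$, let $G'$ be the $(N-1)\times(N-1)$ Schur complement with entries $g'_{ij} = g_{ij} - g_{i1}g_{1j}/g_{11}$ for $i,j\geq 2$. The off-diagonal sign condition is easy: for $i\neq j$ with $i,j\geq 2$ the correction $g_{i1}g_{1j}/g_{11}$ is non-negative (the product of two non-positive numbers divided by a positive one), so $g'_{ij}\leq 0$. The zero row-sum property follows from $\sum_{j\geq 2}g_{ij}=-g_{i1}$ and $\sum_{j\geq 2}g_{1j}=-g_{11}$, which cancel to give $G'e=0$; the column version is symmetric. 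With the off-diagonal signs and the zero row sums in hand, $g'_{ii}=-\sum_{j\neq i}g'_{ij}\geq 0$ is automatic. Thus $G'$ has property $S$ and by induction $G'=L'U'$; assembling the elimination step with $L'$ and $U'$ in the standard way produces $G=LU$.

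The second case, $g_{11}=0$, is the degenerate one. Here property $S$ is strong enough to force the entire first row and first column of $G$ to vanish: since $g_{11}=0$, $g_{1j}\leq 0$ for $j\neq 1$, and $\sum_{j}g_{1j}=0$, every $g_{1j}$ must equal zero, and the column argument is identical. Consequently $G$ decouples as a block diagonal matrix with a $1\times 1$ zero block and an $(N-1)\times(N-1)$ block which itself has property $S$, and the inductive hypothesis on the lower block yields the required factorisation after padding with a trivial first row and column.

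The one place where care is needed is the verification that property $S$ is preserved by Schur complementation, and within that the non-negativity of the Schur diagonal $g'_{ii}$. Attempting to bound $g_{ii}-g_{i1}g_{1i}/g_{11}$ directly is awkward, since it is a difference of two non-negative terms; the clean route is the indirect argument above, combining the off-diagonal signs with the preserved zero row sums. Beyond that, the proof is essentially bookkeeping around a single Gaussian elimination step.
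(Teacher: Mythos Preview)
Your proof is correct and follows essentially the same route as the paper: one step of Gaussian elimination, verification that the Schur complement again satisfies property $S$ (off-diagonals via the sign of $g_{i1}g_{1j}/g_{11}$, zero row/column sums by cancellation, diagonals then for free), and the degenerate $g_{11}=0$ case handled by observing that the first row and column must vanish. The only cosmetic difference is that you set things up as a formal induction on $N$ with the zero-pivot case treated upfront, whereas the paper iterates the elimination and remarks on the zero-pivot case at the end.
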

\begin{proof}
Following common convention the diagonal elements of $L$ are 1. There is no loss of generality if we assume that $g_{11} \neq 0$. After one step of Gaussian elimination (GE) we obtain
$$
    G = \left(\begin{array}{cc}1 & { 0}^T\\
    { \ell} & I\end{array}\right)\left(\begin{array}{cc}1 & { 0}^T\\
    { 0} & \hat{G}\end{array}\right)\left(\begin{array}{cc}u_{11} & {u}^T\\
    { 0} & I\end{array}\right).
$$
Note that, expanding this matrix multiplication gives
$$
    G = \left(\begin{array}{cc}u_{11} & {u}^T\\
    u_{11}{ \ell} &  \;\;\hat{G} +{ \ell u^T}  \end{array}\right),
$$
and therefore ${ u^T} = \left[\begin{array}{cccc}g_{12}\; & \;g_{13}\; & \cdots\; & \;g_{1N}\end{array}\right]$ and ${ \ell} = \displaystyle\frac{1}{u_{11}}\left[\begin{array}{cccc}g_{21}\; & \;g_{31}\; & \cdots\; & \;g_{N1}\end{array}\right]^T$. Clearly, $u_{11} = g_{11} \geq 0$, and therefore ${ u}^T \leq 0$ and ${ \ell} \leq 0$.
We have $G{ e} = { 0}$, which implies
$$\left(\begin{array}{cc}1 & { 0}^T\\
{ 0} & \hat{G}\end{array}\right)\left(\begin{array}{cc}u_{11} & { u}^T\\
{ 0} & I\end{array}\right){ e} = { 0}.$$
It follows that $\hat{G}{ e} = \hat{G}^T{ e} = { 0}$. By definition we have
$$\hat{g}_{ij} = g_{i+1,j+1} - \ell_iu_j, \quad \forall i \neq j.$$
Since $l_i \leq 0$ and $u_j \leq 0$ it follows that $\hat{g}_{ij} \leq 0,\;\forall i \neq j$. From this result and $\hat{G}{ e} = { 0}$ it follows that $\hat{g}_{ii} \geq 0$ and that $\hat{G}$ has property $S$. We can now proceed with the next step of GE. Note that if $\hat{g}_{11} = 0$ we must have the first row and column of $\hat{G}$ be zero, which implies the corresponding row of $U$ is zero and the off-diagonal elements of $L$ are zero. Consequently, we can proceed until the leading diagonal element of $\hat G$ is nonzero.
\end{proof}

\begin{lemma} \label{lem-UNN}
        Regardless of the rank of $G$  we have $u_{NN} = 0$.
\end{lemma}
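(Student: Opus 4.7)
The plan is to iterate the structural observation established in the preceding theorem: one step of Gaussian elimination applied to a matrix with property $S$ produces a trailing Schur complement that again has property $S$. Applying this $N-1$ times will reduce the question to a trivial one about a $1\times 1$ matrix.

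More precisely, I would argue by induction on the step count. After $k$ steps of GE, the trailing $(N-k)\times(N-k)$ block (call it $G^{(k)}$) satisfies property $S$: the inductive step is exactly what the previous theorem already proved. Taking $k=N-1$ leaves a $1\times 1$ matrix $G^{(N-1)}$ whose unique entry is, by construction of the LU factorisation, the pivot $u_{NN}$. The property $S$ conditions $G^{(N-1)} e = 0$ (or equivalently $(G^{(N-1)})^T e = 0$) then read simply $u_{NN}\cdot 1 = 0$, so $u_{NN}=0$.

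The only issue to be careful about is the rank condition mentioned in the statement: if $G$ is rank-deficient, some intermediate pivot $g^{(k)}_{11}$ may vanish before reaching the last step. But this case is already handled at the end of the previous proof: when $\hat g_{11}=0$, property $S$ together with sign constraints forces the entire first row and column of the trailing block to vanish, so the corresponding row of $U$ and column (below the diagonal) of $L$ are zero, and the GE process simply continues with the next diagonal entry. Consequently the trailing $(N-k)\times (N-k)$ block still makes sense and still has property $S$, so the induction goes through unchanged regardless of rank.

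The main obstacle, if any, is purely bookkeeping: making sure that the $1\times 1$ entry one ends up with really is the $(N,N)$ entry of $U$ and not, for instance, a $(N,N)$ entry of the original $G$ that was never updated. This follows directly from the block factorisation displayed in the previous theorem, since the lower-right entry of the product is $\hat G_{11}^{(N-1)} + \ell u^T$-contributions accumulated at each stage, which is precisely the definition of $u_{NN}$ in the standard LU algorithm.
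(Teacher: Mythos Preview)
Your argument is correct and is essentially the same as the paper's: iterate Gaussian elimination so that after $N-1$ steps the trailing block is a $1\times 1$ matrix with property $S$, which forces it (and hence $u_{NN}$) to be $0$. The paper compresses this into two sentences, while you spell out the induction and the rank-deficient case more explicitly, but the underlying idea is identical.
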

\begin{proof}
    After $N-1$ steps of GE $\hat G$ is a $1\times 1$ matrix. The only $1 \times 1$ matrix that satisfies property $S$ is $0$.
\end{proof}

%\begin{corollary}If $G$ has rank $n-1$ the leading principle minor is nonsingular.\end{corollary}
\begin{corollary}If $G$ satisfies property $S$ and $G = LU$, then $Ue = {0}$.\end{corollary}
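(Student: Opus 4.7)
The plan is to observe that the LU decomposition, as constructed in the proof of the previous theorem, has a unit lower triangular factor $L$ (its diagonal entries are $1$ by convention), so $\det(L)=1$ and in particular $L$ is always invertible regardless of whether $G$ itself is singular. Since property $S$ includes $Ge=0$, the chain of equalities
\begin{equation*}
0 \;=\; Ge \;=\; (LU)e \;=\; L(Ue)
\end{equation*}
holds, and multiplying through by $L^{-1}$ on the left gives $Ue=0$.

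The only subtlety worth flagging is that the existence theorem allowed for the possibility that at some stage of Gaussian elimination a leading diagonal entry $\hat{g}_{11}$ of the current Schur complement vanishes; in that event the corresponding row of $U$ and column of $L$ were defined to be zero. I would check that even in this degenerate case the identity $G=LU$ continues to hold as assumed in the statement, so that the one-line argument above is unaffected, and that $L$ is still unit lower triangular and hence invertible. This is the only potential obstacle, but it is immediate from the construction in the theorem's proof.

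As a sanity check, and as a self-contained alternative that does not appeal to $L^{-1}$, one could proceed inductively on the steps of Gaussian elimination. At the first step, property $S$ applied to the first row gives $\sum_{j=1}^{N} g_{1j}=0$; since $u_{11}=g_{11}$ and $u_{1j}=g_{1j}$ for $j\geq 2$, the first row of $U$ already sums to zero. The proof of the previous theorem shows that the trailing Schur complement $\hat G$ again satisfies property $S$, and its LU factorisation is precisely the trailing $(N-1)\times(N-1)$ block of $U$; applying the induction hypothesis yields that the remaining rows of $U$ sum to zero. I would, however, present only the short $L^{-1}$ argument in the final write-up, since it is a one-liner.
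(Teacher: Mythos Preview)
Your proposal is correct and matches the paper's own proof, which is the one-liner you give: $L$ is nonsingular (unit lower triangular), $Ge=0$ by property $S$, and hence $Ue=L^{-1}Ge=0$. The additional discussion of the degenerate pivot case and the inductive alternative are fine but unnecessary; the paper omits them entirely.
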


\begin{proof}The result follows immediately from the nonsingularity of $L$ and the property that $Ge = 0$.\end{proof}

\begin{lemma}If $G$ satisfies property $S$ and $G = LU$, the off-diagonal elements of $L$ and $U$ are non-positive. Moreover, $ -1 \le l_{ij} \le 0$ for $i \ne j$. \label{lem-LU}
\end{lemma}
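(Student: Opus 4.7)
The plan is induction on the dimension $N$ of $G$. The base case $N=1$ is vacuous since there are no off-diagonal entries. For the inductive step I would reuse the one-step Gaussian elimination analysed in the existence theorem: the first row of $U$ past the diagonal is $u^T=(g_{12},\ldots,g_{1N})$, which is non-positive by property~$S$; the first column of $L$ below the diagonal is $\ell=(g_{21},\ldots,g_{N1})^T/u_{11}$, which is non-positive since $u_{11}=g_{11}\geq 0$ and $g_{i1}\leq 0$ for $i\neq 1$; and the Schur complement $\hat G$ still has property~$S$. The strictly upper-triangular entries of $L$ and strictly lower-triangular entries of $U$ are zero, which is trivially in $[-1,0]$, so the only genuinely new ingredient is the lower bound $\ell_i\geq -1$ on the first column of $L$.

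For that bound I would invoke $G^Te=0$, which asserts that the first column of $G$ sums to zero, giving $g_{11}=-\sum_{i=2}^N g_{i1}=\sum_{i=2}^N|g_{i1}|$ where the last equality uses $g_{i1}\leq 0$ for $i\geq 2$. For each fixed $j\geq 2$ this yields $|g_{j1}|\leq g_{11}=u_{11}$, so $|\ell_{j-1}|=|g_{j1}|/u_{11}\leq 1$, and combined with $\ell_{j-1}\leq 0$ we obtain $-1\leq\ell_{j-1}\leq 0$. If $u_{11}=0$ then the same identity forces every $g_{i1}=0$, and the convention established in the existence proof makes the first column of $L$ below the diagonal zero, which trivially satisfies the bound.

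Having handled the first column of $L$ and the first row of $U$, I would apply the induction hypothesis to $\hat G$, which is of dimension $N-1$ and has property~$S$. Its LU factors $\hat L\hat U$ supply all remaining rows of $U$ and columns of $L$, and by induction their off-diagonal entries are non-positive, with the off-diagonal entries of $\hat L$ lying in $[-1,0]$. Assembling the two pieces gives the conclusion for $G$. The main (and essentially only) subtle point is the uniform bound $\ell_i\geq -1$; everything else amounts to recycling the existence proof inside the induction.
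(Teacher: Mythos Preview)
Your proposal is correct and follows essentially the same route as the paper's proof: both arguments use that at each step of Gaussian elimination the Schur complement retains property~$S$, so the new row of $U$ and column of $L$ are non-positive, and both obtain the bound $|\ell_i|\le 1$ from the column-sum identity $G^Te=0$, which forces the pivot $g_{11}=\sum_{i\ge 2}|g_{i1}|$ to dominate every sub-diagonal entry in its column. The paper states this more tersely (``the pivot used is always the largest in magnitude \ldots\ a consequence of $\hat G^Te=0$''), whereas you spell out the induction and the zero-pivot case explicitly, but the mathematical content is the same.
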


\begin{proof}

The first part follows immediately from the fact that ${u}^T \leq 0$ and ${ l} \leq 0$ in each iteration of GE. Note that when performing the GE the pivot used is always the largest in magnitude. This is a consequence of $\hat G^T e = 0$ and the off-diagonal elements in the column of $\hat G$ being negative. Since the elements being eliminated are not bigger in magnitude than the pivot it follows that the magnitude of elements of $L$ are not bigger than 1.

\end{proof}

The above result has important consequences for the condition number of $L$.

\begin{lemma}
If $G$ satisfies property $S$, is  rank $N - 1$ and $G = LU$, then $L^T e = e_N$.
\end{lemma}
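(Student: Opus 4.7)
The plan is to use the identity $G^T = U^T L^T$ together with the property $S$ condition $G^T e = 0$, and exploit the triangular structure to pin down $L^T e$ uniquely.

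First I would set $v = L^T e$ and observe that $G^T e = 0$ combined with $G = LU$ gives $U^T v = U^T L^T e = G^T e = 0$. The matrix $U^T$ is lower triangular with diagonal entries $u_{11}, u_{22}, \ldots, u_{NN}$. Since $L$ is unit lower triangular it is nonsingular, so the rank assumption on $G$ transfers to $U$: exactly one diagonal entry of $U$ must vanish. By Lemma \ref{lem-UNN} we already know $u_{NN} = 0$, so $u_{ii} \ne 0$ for $i = 1, \ldots, N-1$.

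Next I would solve the homogeneous lower triangular system $U^T v = 0$ by forward substitution. The first equation reads $u_{11} v_1 = 0$, forcing $v_1 = 0$; the $i$th equation then reads $u_{ii} v_i + \sum_{j<i} u_{ji} v_j = 0$, and an easy induction using $u_{ii} \ne 0$ for $i < N$ gives $v_1 = v_2 = \cdots = v_{N-1} = 0$. The final equation involves $u_{NN} = 0$ and is automatically satisfied, so $v_N$ is free. Thus $v = v_N e_N$.

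To finish, I would compute $v_N$ directly: $v_N = (L^T e)_N = \sum_{i=1}^N l_{iN}$, but $L$ is lower triangular with unit diagonal, so $l_{iN} = 0$ for $i < N$ and $l_{NN} = 1$, giving $v_N = 1$. Hence $L^T e = e_N$. The only nontrivial step is justifying that $u_{ii} \ne 0$ for $i < N$, which is where the rank $N-1$ hypothesis enters in an essential way; everything else is immediate from the triangular structure.
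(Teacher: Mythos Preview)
Your proof is correct and follows essentially the same route as the paper: both use $G^Te=0$ to place $L^Te$ in the nullspace of $U^T$, identify that nullspace as $\operatorname{span}\{e_N\}$ via the rank hypothesis and Lemma~\ref{lem-UNN}, and then read off the scalar from the unit lower-triangular structure of $L$. You have simply spelled out the forward-substitution and the computation of $v_N$ in more detail than the paper's terse ``since $L$ has unit diagonal entries, $\kappa=1$.''
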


\begin{proof}Since $U$ has rank $N-1$ and $u_{NN} = 0$, the basis for the nullspace of $U^T$ is $\kappa e_N$, where $\kappa \ne 0$. From property $S$,

$$G^T e = U^T (L^T e) = 0$$

and hence we obtain $L^T e = \kappa e_N$, for some $\kappa$. Since $L$ has unit diagonal entries, we have $\kappa = 1$, which yields the result.\end{proof}

%\begin{proof}From Lemma \ref{lem-UNN} we have that $u_{NN} = 0$. Since $G$ is only rank deficient by 1, and $L$ is nonsingular, it implies that $u_{NN}$ is the only zero diagonal entry of $U$. Then, consider a new upper-triangular matrix $\tilde{U} = U + e_Ne_N^T$. It is clear that $\tilde{U}$ is full rank. Then, we have
%
%\vspace*{-0.5cm}\begin{eqnarray*}L\tilde{U} & = & G + Le_Ne_N^T.\end{eqnarray*}
%
%Multiplying on the left by $e^T$ and on the right by the inverse of $\tilde{U}$, from property $S$ we obtain
%
%$$e^TL = e^T(G + Le_Ne_N^T)(\tilde{U})^{-1} = e^TLe_Ne_N^T(\tilde{U})^{-1}.$$
%
%Note that $e^TLe_N$ is a scalar. Since $\tilde{U}$ is upper-triangular, its inverse is also upper-triangular, and therefore we know that
%
%\begin{eqnarray}e^TL & = & ke_N^T,\label{eq-eTLken}\end{eqnarray}
%
%for some scalar $k$. Finally, we know that $Le_N = e_N$, and multiplying (\ref{eq-eTLken}) on the right by $e_N$ yields $k = 1$, which concludes the proof.\end{proof}

\begin{lemma}
    When $G$  satisfies property $S$ and is rank $N-1$ then $ \bar G \equiv G + { e}{ e}_N^T$ is nonsingular. \label{lem-Ubar}
\end{lemma}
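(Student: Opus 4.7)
The plan is to exhibit a factorization of $\bar G$ whose nonsingularity is immediate. Starting from $G = LU$ (guaranteed by the earlier existence theorem), I would factor out $L$ from the rank-one correction:
$$
\bar G \;=\; LU + e\,e_N^T \;=\; L\bigl(U + (L^{-1}e)\,e_N^T\bigr) \;\equiv\; L\bar U.
$$
Because $L$ has unit diagonal and is therefore nonsingular, it would suffice to show that $\bar U$ is nonsingular.

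Next, I would observe that the rank-one perturbation $(L^{-1}e)\,e_N^T$ alters only the $N$th column of $U$, so $\bar U$ remains upper triangular. Its diagonal entries are $u_{11},\dots,u_{N-1,N-1}$ followed by $u_{NN} + (L^{-1}e)_N$. The first $N-1$ of these are nonzero: the proof of the existence theorem shows that each zero pivot encountered during Gaussian elimination forces the corresponding row of $U$ to vanish, so the number of nonzero diagonal entries of $U$ equals $\operatorname{rank}(U) = \operatorname{rank}(G) = N-1$. Since $u_{NN} = 0$ by Lemma \ref{lem-UNN}, this one zero must be the last entry, and all $u_{ii}$ with $i<N$ are nonzero.

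The main step, and the only non-routine point, is to evaluate $(L^{-1}e)_N$. For this I would invoke the identity $L^T e = e_N$ from the preceding lemma. Transposing and inverting yields $e_N^T L^{-1} = e^T$, whence
$$
(L^{-1}e)_N \;=\; e_N^T L^{-1} e \;=\; e^T e \;=\; N \;\ne\; 0.
$$
Combining the two observations, $\bar U$ is upper triangular with all nonzero diagonal entries, so $\bar U$ (and hence $\bar G = L\bar U$) is nonsingular. I expect the only conceptual subtlety to be justifying that the $u_{ii}$ for $i<N$ are nonzero from the rank hypothesis; a cleaner alternative that sidesteps the LU machinery is to suppose $\bar G x = 0$, pre-multiply by $e^T$ to deduce $x_N = 0$ using $e^T G = 0$, and then conclude $x\in\operatorname{null}(G) = \operatorname{span}(e)$, which forces $x = 0$.
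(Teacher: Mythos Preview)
Your main argument is essentially the paper's own proof: factor $\bar G = L\bigl(U + (L^{-1}e)\,e_N^T\bigr) = L\bar U$, observe that $\bar U$ is upper triangular, and use the preceding identity $L^Te = e_N$ to compute $\bar u_{NN} = e_N^T L^{-1} e = e^T e = N$. You are in fact more careful than the paper in explicitly arguing, from the rank hypothesis and the structure of the elimination, that the first $N-1$ pivots $u_{ii}$ are nonzero---the paper leaves this implicit---and your closing null-space alternative (premultiply $\bar G x = 0$ by $e^T$ to force $x_N = 0$, then use $\operatorname{null}(G) = \operatorname{span}(e)$) is a correct and more elementary route that bypasses the LU machinery entirely, though it is not the one the paper takes.
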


\begin{proof}

We have
$$
    \bar G= LU +  ee^T_N = L(U + v e^T_N)= L\bar U,
$$
where $L v = e$. Note that $\bar U = U +v e^T_N$ is upper triangular. Moreover,
the $(N,N)$th element of $\bar U$ is $v_N$. Since
\begin{eqnarray}e^T Lv = e_N^T v = v_N = e^T e = N,\label{eq-eTLv}\end{eqnarray}
it implies that $\bar{U}$, and hence $\bar G$, is nonsingular.
\end{proof}

Clearly $G = I-P$ has property $S$. It should be noted that the existence of an LU decomposition for $G$ has been demonstrated previously for \emph{irreducible} $P$ in Heyman \cite{hey1}. In the context of this paper, we note that when $P(x)$ corresponds to the strict interior of $\mathcal{DS}$, irreducibility applies.

\vspace*{-0.8cm}
\section{Computing $\det(A(P))$ using an LU decomposition}\label{sec-LU Decomposition}
\vspace*{-0.4cm}

When appropriate we suppress the (fixed) argument ${x}$ in $P({x})$, $A(P({x}))$ and $f(P({x}))$, and write simply $P$, $A(P)$ (or just $A$) and $f(P)$, respectively. The most efficient way to compute the determinant of a matrix is to compute its LU decomposition. Normally we obtain $LU = \Pi A$, where $ \Pi$ is a permutation matrix. However, we shall show that det ($A$) may be computed from either the determinant of  the leading principal minor of  $I - P$ or of det($I - P + ee_N^T)$. Consequently, no permutation matrix is needed. This is of importance since unlike $A$ we expect $I - P$ to be sparse, which is the case of interest for finding HC.  Factorising sparse matrices is much faster than factorising dense matrices. The difference is even greater when it is unnecessary to do numerical pivoting. Knowing apriori where the fill-in is in the factors will enable more efficient data management and avoid indirect addressing.

We assume an algorithm to solve (\ref{eq-problem_constrained}) starts at a feasible point and all iterates remain feasible. For problems with nonlinear objectives this is almost always the best approach. Finding a point in $\mathcal{DS}$ is simple and does not involve evaluating the objective or its derivatives. It is known that the global minimiser of (\ref{eq-problem_constrained}) is $-N$. Indeed, from Theorem 4.1, Proposition 4.4 and Proposition 4.6 in Ejov et al \cite{detpaper}, we know that $-\det(A(P(x))) \in [-N,0]$, where the upper bound is obtained when $A(P(x))$ is singular. However, we have no interest in finding the determinant of $A$ when it is singular. For the rest of this paper we assume $x \in \mathcal{DS}$ and that $A(P(x))$ is nonsingular. Since $A(P(x))$ is merely a rank-one correction of $I - P$, and the latter is always singular for $x \in \mathcal{DS}$, the nonsingularity of $A(P(x))$ implies that $I - P$ has rank $N - 1$.

%Consequently, if $A(P(X)$ is singular it implies the objective must have an infinite \emph{positive} value. It follows we have no interest in finding the determinant of $A$ when it is singular. For the rest of this paper we assume $x \in \mathcal{DS}$ and that $A(P({x}))$ is nonsingular. From previous results we know that this implies $I - P$ is rank $N - 1$ and its leading principle minor is nonsingular.

\subsection{Product forms of $A(P)$ and $\det(A(P))$}\label{subsec-det}
To calculate the objective function $f(P)$, its gradient and Hessian, we begin by performing an LU decomposition
to obtain
\begin{eqnarray}LU & = & G = I - P.\label{eq-lu}\end{eqnarray}
 While there may be a need to reorder the matrix to obtain a sparse factorisation. Since the pattern of nonzero elements in $G$ is symmetric then symmetric pivoting may be used, which is equivalent to renumbering the nodes of the graph. It follows we may assume the effort to compute the LU factors is $O(N^2)$. Note this reordering is done once since it depends only on the location of nonzeros and not their values. Whatever ordering is chosen the LU factors of $G$ reordered exist without the need for pivoting based on numerical considerations.

The outline of the derivation of $f(P)$ in terms of $U$ is as follows.
\begin{enumerate}
\item[(1)] We express A(P) as the product of three nonsingular factors.
\item[(2)] We show that two of these factors have a determinant of 1.
\item[(3)] We show that the third factor shares all but one eigenvalue with $U$, with the single different eigenvalue being $N$ (rather than 0).
\item[(4)] We express the determinant as a
product of the first $N-1$ diagonal elements of $U$, and $N$.\end{enumerate}
First, we express $A$ as a product of $L$ and another factor. Let ${v}$ be an $N \times 1$ vector, and $\bar{U}$ be an $N \times N$ matrix, such that
\begin{eqnarray}
L{v} = {e}\mbox{,\quad and \;\;\;\;\;}\bar{U} := U + {v}{e}^T_N,\label{eq-lu2}
\end{eqnarray}
where ${e}_N^T = \left[\begin{array}{cccc}0 & \cdots & 0 & 1\end{array}\right]$.
Since $L$ is
nonsingular, ${v} \ne 0$ exists and is unique, and therefore $\bar{U}$ is well-defined. The first $N - 1$ columns of $\bar U$ are identical to those of $U$.  Consequently $\bar U$ is also upper triangular. Since $LU = G$ satisfies property $S$, it follows from Lemma \ref{lem-UNN} that $u_{NN} = 0$, and therefore from (\ref{eq-eTLv}) that $\bar u_{NN} = v_N = N$.
 Therefore,
\begin{eqnarray}
{\hfill}\det\bar{U} =  \prod_{i=1}^N \bar{u}_{ii} =
N \prod_{i=1}^{N-1} u_{ii},
\label{eq-detUbar}
\end{eqnarray}

where $u_{ii}$ is the $i$-th diagonal element of $U$. Exploiting (\ref{eq-lu2}) we may write
\begin{eqnarray} A & = & (I - P) + \frac{1}{N}{e}{e}^T \;\;=\;\; LU + \frac{1}{N}L{v}{e}^T \;\;=\;\; L(U + \frac{1}{N} {v} {e}^T)\nonumber\\
& = & L(\bar{U}
+ {v}[\frac{1}{N} {e}^T - {e}_N^T]).\label{eq-A4}\end{eqnarray}

Since $\bar U$ is nonsingular we may define ${w}$ to be the unique solution to the system
\begin{eqnarray}\bar{U}^T {w} & = & \frac{1}{N} {e} - {e}_N.\label{eq-U^Tw}\end{eqnarray}
Then, from (\ref{eq-A4})

\begin{eqnarray}A = L\left(\bar{U} + {v}\left[\frac{1}{N}{e}^T - {e}_N^T\right]\right) = L\left(\bar{U} + {v}{w}^T\bar{U}\right) = L\left(I + {v}{w}^T\right)\bar{U}.\label{eq-A5}\end{eqnarray}
We take the determinant of (\ref{eq-A5}) to obtain
\begin{eqnarray}\det(A) & = & \det(L)\det(I + {v} {w}^T)\det(\bar{U}).\label{eq-detA}\end{eqnarray}
Note that, for any vectors ${ c}$ and ${ d}$,
\begin{eqnarray}\det(I + { c}{d}^T) & = & 1 + {d}^T{c}.\label{eq-identity with c and d}\end{eqnarray}
This is because ${c}{d}^T$ has one eigenvalue ${d}^T{c}$ of multiplicity 1 and an eigenvalue 0 of multiplicity $N-1$. Consequently,
\begin{eqnarray}\det(I + {v}{w}^T) & = & 1 + {w}^T{v},\label{eq-detrankcorrection}\end{eqnarray}
which we substitute into expression (\ref{eq-detA}) above.

\begin{lemma}\label{lem-w^Tv = 0}The inner-product ${w}^T{v}$ in (\ref{eq-detrankcorrection}) satisfies
\begin{eqnarray}{w}^T {v} & = & 0.\nonumber\end{eqnarray}
\end{lemma}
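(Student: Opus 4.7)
The plan is to exploit the double stochasticity inherited by $A$. Since $x \in \mathcal{DS}$ gives $Pe = e$, and $e^T e = N$, one has
$$
Ae = (I-P)e + \tfrac{1}{N}e(e^T e) = e.
$$
Feeding this single identity through the factorisation $A = L(I + vw^T)\bar U$ in (\ref{eq-A5}) should pin down $w^T v$ without any explicit manipulation of the triangular system $\bar U^T w = \frac{1}{N}e - e_N$ that defines $w$.

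Concretely, I would apply both sides of (\ref{eq-A5}) to the vector $e$. The only nontrivial ingredient is evaluating $\bar U e$: writing $\bar U = U + v e_N^T$ and using $e_N^T e = 1$ gives $\bar U e = U e + v$, and $U e = 0$ follows from $L(Ue) = G e = (I-P)e = 0$ together with nonsingularity of $L$. Hence $\bar U e = v$. Substituting into the factorisation and invoking $Lv = e$ from the definition of $v$ yields
$$
e = A e = L(I + v w^T) v = (1 + w^T v)\, L v = (1 + w^T v)\, e,
$$
and scalar comparison forces $w^T v = 0$.

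The only step that asks for any insight is choosing to hit the factorisation with $e$ rather than trying to untangle $w$ from its defining equation; the neat cancellation hinges on the observation that $\bar U e = v$. Once that is seen, everything collapses via relations already recorded in the section — namely $U e = 0$, $L v = e$, and $A e = e$ — so I do not anticipate any real obstacle.
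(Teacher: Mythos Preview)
Your argument is correct. Both proofs rest on a row-sum identity, but they channel it differently. The paper expands $w^Tv$ from the definitions to obtain $(\tfrac{1}{N}e^T - e_N^T)(L\bar U)^{-1}e$, then observes that $L\bar U = I - P + ee_N^T$ has all row sums equal to $1$, so that $(L\bar U)^{-1}e = e$ and the scalar collapses to $\tfrac{1}{N}e^Te - e_N^Te = 0$. You instead push the vector $e$ through the full factorisation $A = L(I+vw^T)\bar U$ of (\ref{eq-A5}) and use $Ae = e$ together with $\bar U e = v$ and $Lv = e$; the scalar $1+w^Tv$ then falls out of a single linear identity. Your route is arguably cleaner, since it recycles (\ref{eq-A5}) directly rather than revisiting the intermediate product $L\bar U$, and it avoids having to argue separately that the inverse of a matrix with unit row sums also has unit row sums. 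The paper's route, on the other hand, makes the r\^ole of the rank-one correction $ee_N^T$ a little more explicit.
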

\begin{proof}From their respective definitions (\ref{eq-U^Tw}) and (\ref{eq-lu2}),
\begin{eqnarray}{w}^T & = & \left(\frac{1}{N} {e}^T - {e}_N^T\right)(\bar{U})^{-1},\label{eq-wT}\\
{v} & = & L^{-1}{e}.\label{eq-v}\end{eqnarray}
Then, from (\ref{eq-wT})--(\ref{eq-v}) we obtain
\begin{eqnarray}{w}^T {v} & = & \left(\frac{1}{N} {e}^T - {e}_N^T\right)(\bar{U})^{-1}L^{-1}{e} \;\;=\;\; \left(\frac{1}{N} {e}^T - {e}_N^T\right)(L\bar{U})^{-1}{e}\nonumber\\
& = & \left(\frac{1}{N} {e}^T - {e}_N^T\right)(I - P + {e}{e}_N^T)^{-1}{e}.\label{eq-construction of wTv}\end{eqnarray}
Since $I$, $P$ and ${e}{e_N}^T$ are all stochastic matrices, we know that $I - P + {e}{e}_N^T$ has row sums of 1 as well. Hence, its inverse also has row sums equal to 1, that is,
\begin{eqnarray}(I - P + {e}{e}_N^T)^{-1}{e} & = & {e}.\label{eq-inverse row sums}\end{eqnarray}
Substituting (\ref{eq-inverse row sums}) into (\ref{eq-construction of wTv}), we obtain
\begin{eqnarray*}\hspace*{3.2cm}{w}^T {v} = \left(\frac{1}{N} {e}^T - {e}_N^T\right){e} = 0,\end{eqnarray*}
which concludes the proof.\end{proof}
We now derive the main theorem of this subsection.
\begin{theorem}\label{theorem-det}
Let $LU$ denote the LU decomposition of $I - P$ and $u_{ii}$ be the diagonal elements of $U$ then
\begin{eqnarray}\det(A(P)) & = & N \prod\limits_{i=1}^{N-1} u_{ii}. \nonumber
\end{eqnarray}
\end{theorem}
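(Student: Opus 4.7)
The plan is to observe that essentially all the pieces of the proof have already been assembled in the subsection leading up to the theorem, so the argument is just a matter of combining them. Starting from the factorisation in (\ref{eq-A5}), namely $A = L(I + vw^T)\bar{U}$, I would take determinants of both sides to obtain (\ref{eq-detA}): $\det(A) = \det(L)\,\det(I + vw^T)\,\det(\bar{U})$. This is the expression I need to simplify.

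I would then evaluate the three factors one by one. First, $\det(L) = 1$ because $L$ is unit lower triangular. Second, by the rank-one identity (\ref{eq-identity with c and d}) we have $\det(I + vw^T) = 1 + w^T v$, and this equals $1$ by Lemma \ref{lem-w^Tv = 0}. Third, $\det(\bar{U})$ was already computed in (\ref{eq-detUbar}) as $N \prod_{i=1}^{N-1} u_{ii}$, using the fact that $\bar{U}$ is upper triangular with the same first $N-1$ diagonal entries as $U$ and with last diagonal entry $v_N = N$ (which came from Lemma \ref{lem-UNN} and the calculation in (\ref{eq-eTLv})).

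Substituting these three evaluations into (\ref{eq-detA}) would give the stated identity
\begin{equation*}
\det(A(P)) \;=\; 1 \cdot 1 \cdot N\prod_{i=1}^{N-1} u_{ii} \;=\; N\prod_{i=1}^{N-1} u_{ii}.
\end{equation*}

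There is no real obstacle here; the substantive work is in the preliminary lemmas rather than in this theorem. The only mild subtlety is to verify that the factorisation (\ref{eq-A5}) and the vector $w$ are well-defined in the setting of the theorem, which requires $\bar{U}$ to be nonsingular. This is exactly the content of Lemma \ref{lem-Ubar}, applicable because we assumed at the start of the section that $A(P)$ is nonsingular and hence $G = I - P$ has rank $N-1$ and satisfies property $S$. Once that is noted, the theorem follows by direct substitution.
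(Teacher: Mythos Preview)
Your proposal is correct and matches the paper's own proof essentially line for line: the paper likewise combines (\ref{eq-detA}), (\ref{eq-detrankcorrection}), Lemma~\ref{lem-w^Tv = 0}, $\det(L)=1$, and (\ref{eq-detUbar}) to conclude. Your added remark about Lemma~\ref{lem-Ubar} ensuring $\bar U$ is nonsingular is a reasonable clarification but not something the paper spells out in the proof itself.
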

\begin{proof}
From (\ref{eq-detA}), (\ref{eq-detrankcorrection}) and Lemma \ref{lem-w^Tv = 0} we know that
\begin{eqnarray}\det(A(P)) & = & \det(L)(1 + 0)\det(\bar{U}).\nonumber\end{eqnarray}
From the construction of the LU decomposition we know that $\det(L) = 1$ and using (\ref{eq-detUbar})
gives
\begin{eqnarray}\det(A(P)) & = & \det(\bar{U})= N\prod\limits_{i=1}^{N-1}
u_{ii}.\nonumber\end{eqnarray}
%From (\ref{eq-detUbar}) and Lemma \ref{lem-UbarNN} we now see that
%\begin{eqnarray}\det(A(P)) & = & \det(\bar{U}) = N\left(\prod\limits_{i=1}^{N-1}
%u_{ii}\right).\nonumber\end{eqnarray}
This concludes the proof.\end{proof}
\begin{remark}Note that finding ${v}$ and ${w}$ is a simple process because $L$ and $\bar{U}^T$ are lower-triangular matrices, so we can solve the systems of linear equations in (\ref{eq-U^Tw}) and (\ref{eq-lu2}) directly. \end{remark}
\subsection{Finding the gradient {\em g(P)}}\label{subsec-gradient ipm}
Next we use the LU decomposition found in Subsection \ref{subsec-det} to find the gradient of
$f(P) = -\det(A(P))$. Note that since variables of $f(P)$ are entries $x_{ij}$ of the probability transition
matrix $P({x})$, we derive an expression for $g_{ij}(P) := \frac{\partial f(P)}{\partial x_{ij}}$ for each
$x_{ij}$ such that $(i,j) \in \Gamma$.

Consider vectors $a_j$ and $b_i$ satisfying the equations $\bar{U}^Ta_j = {e}_j$ and $Lb_i = {e}_i$, where ${e}_j$ is a zero vector except for a unit entry in the $j$-th column. Then, we define $Q := I - {v}{w}^T$, where ${v}$ and ${w}^T$ are as in (\ref{eq-wT})--(\ref{eq-v}). We prove the following result
in this subsection:
\begin{eqnarray}g_{ij}(P) = \det(A(P))(a_j^TQb_i),\label{eq-g_ij(P)}\end{eqnarray}
where $g_{ij}(P)$ is the gradient vector element corresponding to the arc $(i,j) \in \Gamma$.

The outline of the derivation of (\ref{eq-g_ij(P)}) is as follows.
\begin{enumerate}\item[(1)] We represent each element $g_{ij}(P)$ of the gradient vector as a cofactor of $A(P)$.
\item[(2)] We construct an elementary matrix that transforms matrix $A(P)$ into a matrix with determinant equal to the above cofactor of $A(P)$.
\item[(3)] We then express the element $g_{ij}(P)$ of the gradient vector as the product of $\det(A(P))$ and the determinant of the elementary matrix, the latter of which is shown to be equal to $a_j^TQb_i$.\end{enumerate}
For any matrix $V = \left(v_{ij}\right)_{i,j=1}^{N,N}$ it is well-known (e.g., see May \cite{May1}) that
$\displaystyle\frac{\partial \det(V)}{\partial v_{ij}} = (-1)^{i+j}\det(V^{ij})$, where $V^{ij}$ is the $(i,j)$-th
minor of $V$. That is, $\displaystyle\frac{\partial \det(V)}{\partial v_{ij}}$ is the $(i,j)$-th cofactor of $V$. Since
the $(i,j)$-th entry of $A(P)$ is simply $a_{ij} = \delta_{ij} - x_{ij} + \frac{1}{N}$ (where $\delta_{ij}$ is the Kronecker delta that is 1 if $i = j$, and 0 otherwise), it now follows that
\begin{eqnarray}g_{ij}(P) = \frac{\partial f(P)}{\partial x_{ij}} = \frac{\partial \left[-\det{A(P)}\right]}{\partial a_{ij}}\frac{da_{ij}}{dx_{ij}} = (-1)^{i+j}\det\left(A^{ij}(P)\right).\label{eq-g}\end{eqnarray}
However, rather than finding the cofactor we calculate (\ref{eq-g}) by finding
the determinant of a modification of $A$ in which the $i$th row  has been replaced with ${e}_j^T$. Since $A$ is a full-rank matrix, it is
possible to perform row operations to achieve this. Suppose $A$ is composed of rows $r_1^T$, $r_2^T$, $\hdots$, $r_N^T$. Then, we perform the following row operation:
\begin{eqnarray}r_i^T & \rightarrow & \alpha_j(1)r_1^T + \alpha_j(2)r_2^T + \hdots +
\alpha_j(N)r_N^T,\label{eq-ipm row operation}\end{eqnarray}
where $\alpha_j(i)$ is the $i$-th element of vector $\alpha_j$ and $A^T\alpha_j =
{e}_j$. This row operation replaces the $i$-th row of $A$ with $\alpha_j^T A = {e}_j^T$, as desired.

In this case, from (\ref{eq-A5}), $A^T = \bar{U}^T(I + {w}{v}^T)L^T$. Since $A$ is nonsingular
$\alpha_j$ can be found directly:
\begin{eqnarray}\alpha_j & = & \left(A^T\right)^{-1}{e}_j \;\;=\;\; \left[\bar{U}^T(I + {w}{v}^T)L^T\right]^{-1}{e}_j \nonumber\\ & = & (L^T)^{-1}(I + {w}{v}^T)^{-1}(\bar{U}^T)^{-1}{e}_j.\label{eq-alpha}\end{eqnarray}
\begin{lemma}
\begin{eqnarray}\left(I + {w}{v}^T\right)^{-1} & = & I - {w}{v}^T.\nonumber\end{eqnarray}\end{lemma}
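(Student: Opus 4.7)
The plan is to verify the identity directly by multiplication, leveraging the orthogonality relation $w^T v = 0$ established in Lemma \ref{lem-w^Tv = 0}. Since both sides are square matrices of the same order, it suffices to check that their product is the identity (on one side), rather than appealing to any inversion formula.

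Concretely, I would compute
\begin{eqnarray*}
(I + w v^T)(I - w v^T) & = & I - w v^T + w v^T - w v^T w v^T \\
& = & I - w\,(v^T w)\,v^T.
\end{eqnarray*}
By Lemma \ref{lem-w^Tv = 0}, the scalar $v^T w = w^T v = 0$, and therefore the rank-one term vanishes, leaving $I$. This shows that $I - w v^T$ is a right inverse of $I + w v^T$, and since the factors are $N \times N$ this is enough to conclude it is the two-sided inverse.

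The only conceptual point to flag is that the identity is really a degenerate case of the Sherman--Morrison formula
\[
(I + w v^T)^{-1} \;=\; I \;-\; \frac{w v^T}{1 + v^T w},
\]
valid whenever $1 + v^T w \neq 0$; here $v^T w = 0$ makes the denominator equal to $1$ and the formula collapses to the stated expression. I would not invoke Sherman--Morrison explicitly, since the one-line direct verification above is cleaner and self-contained.

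There is essentially no obstacle in this proof: once Lemma \ref{lem-w^Tv = 0} is in hand, the identity follows from a single line of algebra. The only care needed is to keep track of the order of the scalar and outer-product factors in the expression $w v^T w v^T = w (v^T w) v^T$, which uses the associativity of matrix multiplication and the fact that $v^T w$ is a scalar that can be pulled out.
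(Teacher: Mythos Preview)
Your proof is correct and is essentially identical to the paper's own argument: the paper also multiplies $(I + wv^T)(I - wv^T)$, cancels the linear terms, and kills the remaining $w(v^Tw)v^T$ term using Lemma~\ref{lem-w^Tv = 0}. Your additional remarks about one-sided inverses for square matrices and the connection to Sherman--Morrison are accurate but not needed for the argument.
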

\begin{proof}Consider
\begin{eqnarray*}(I + {w}{v}^T)(I - {w}{v}^T) & = & I - {w}{v}^T + {w}{v}^T - {w}{v}^T{w}{v}^T\\
& = & I - {w}{v}^T{w}{v}^T\\ & = & I\mbox{, because } {v}^T{w} = {w}^T{v} =
0,\quad \;\;\;\;\mbox{from Lemma \ref{lem-w^Tv = 0}.}\end{eqnarray*}
Therefore, $\left(I + {w}{v}^T\right)^{-1} = \left(I - {w}{v}^T\right).$\end{proof}
Taking the above result and substituting into (\ref{eq-alpha}), we obtain
\begin{eqnarray}\alpha_{j} & = &(L^T)^{-1}(I - {w}{v}^T)(\bar{U}^T)^{-1}{e}_j.\label{eq-alpha2}\end{eqnarray}
Next, we define an elementary matrix $E_{ij}$ by
\begin{eqnarray}E_{ij} & := & I - {e}_i{e}_i^T + {e}_i\alpha_j^T,\label{eq-E_BF}\end{eqnarray}
and note that it performs the desired row operation (\ref{eq-ipm row operation}) on A because
\begin{eqnarray}E_{ij}A & = & A - {e}_ir_i^T + {e}_i{e}_j^T,\nonumber\end{eqnarray}
in effect replacing the $i$-th row of $A$ with ${e}_j^T$. Therefore,
\begin{eqnarray}g_{ij}(P) & = & (-1)^{i+j}\det\left(A^{ij}\right) \;\;=\;\; \det(E_{ij}A) \;\;=\;\; \det(E_{ij})\det(A).\label{eq-g2}\end{eqnarray}
From (\ref{eq-E_BF}), we rewrite $E_{ij} = I - {e}_i({e}_i - \alpha_j)^T$. Then, from (\ref{eq-identity with c and d})
we obtain
\begin{eqnarray}\det(E_{ij}) & = & 1 - ({e}_i -
\alpha_j)^T{e}_i \;\;=\;\; 1 - {e}_i^T{e}_i + \alpha_j^T{e}_i \;\;=\;\; \alpha_j^T{e}_i.\label{eq-detE}\end{eqnarray}
Substituting (\ref{eq-alpha2}) into (\ref{eq-detE}) we obtain
\begin{eqnarray}\det(E_{ij}) & = & {e}_j^T(\bar{U})^{-1}(I - {v}{w}^T)(L)^{-1}{e}_i.\label{eq-detE_middle}\end{eqnarray}
For convenience we define $Q := I - {v}{w}^T$. Then
\begin{eqnarray}\det(E_{ij}) & = & a_j^TQb_i\mbox{,\quad where } \bar{U}^Ta_j
= {e}_j\mbox{ and }Lb_i = {e}_i.\label{eq-detE2}\end{eqnarray}
We now derive the main result of this subsection.
\begin{proposition}\label{prop-gradient} The general gradient element of $f(P)$ is given by
\begin{eqnarray}g_{ij}(P) = \frac{\partial f(P)}{\partial x_{ij}} & = & \det(A(P))(a_j^TQb_i).\label{eq-gradient_final}\end{eqnarray}\end{proposition}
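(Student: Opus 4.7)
The proof is essentially an assembly of the identities already derived in the build-up to the proposition, so the plan is to state the two ingredients in the right order and combine them.

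For the first ingredient I would use the elementary matrix $E_{ij}$ from (\ref{eq-E_BF}). Because $A^T \alpha_j = e_j$, the product $E_{ij} A = A - e_i r_i^T + e_i e_j^T$ agrees with $A$ in every row except the $i$-th, which is replaced by $e_j^T$. Expanding $\det(E_{ij} A)$ along row $i$ — whose only nonzero entry is a $1$ in column $j$ — returns $(-1)^{i+j} \det(A^{ij})$, which by (\ref{eq-g}) is exactly $g_{ij}(P)$. Multiplicativity of the determinant then gives the intermediate identity $g_{ij}(P) = \det(E_{ij}) \det(A(P))$.

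For the second ingredient I would invoke (\ref{eq-detE2}), which has already been obtained by rewriting $E_{ij} = I - e_i (e_i - \alpha_j)^T$, applying the rank-one identity (\ref{eq-identity with c and d}), and substituting expression (\ref{eq-alpha2}) for $\alpha_j$. The collapse $(I + w v^T)^{-1} = I - w v^T$, which rests on $w^T v = 0$ from Lemma \ref{lem-w^Tv = 0}, is precisely what produces the compact factor $Q = I - v w^T$ inside $\det(E_{ij}) = a_j^T Q b_i$. Composing the two ingredients yields the stated formula $g_{ij}(P) = \det(A(P))(a_j^T Q b_i)$.

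There is no real technical obstacle beyond bookkeeping. The one point worth checking carefully is the sign accounting in the cofactor expansion along the replaced row, since that is the reason the unsigned determinant $\det(E_{ij} A)$ matches the signed cofactor $(-1)^{i+j} \det(A^{ij})$ with no residual sign. Well-definedness of $a_j$ and $b_i$ is guaranteed by the nonsingularity of $\bar U$ (Lemma \ref{lem-Ubar}) and of $L$, so the final expression $a_j^T Q b_i$ is unambiguous and the proof is complete.
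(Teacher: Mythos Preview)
Your proposal is correct and follows the same route as the paper: the paper's own proof simply substitutes (\ref{eq-detE2}) into (\ref{eq-g2}), and your two ``ingredients'' are precisely the derivations of those two equations already carried out in the text preceding the proposition. The extra care you take over the cofactor sign and the well-definedness of $a_j$, $b_i$ is sound but not needed beyond what the paper has already established.
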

\begin{proof}Substituting (\ref{eq-detE2}) into (\ref{eq-g2}) immediately yields the result.\end{proof}
\begin{remark}Note that we can calculate all $a_j$ and $b_i$ in advance, by solving the systems of linear
equations in (\ref{eq-detE2}), again in reduced row echelon form. Then, for the sake of efficiency we first calculate
\begin{eqnarray}\hat{q}_j^T & := & a_j^TQ,\quad j = 1, \hdots, N,\label{eq-q-hat_j}\end{eqnarray}
and then calculate
\begin{eqnarray}\hat{q}_{ij} & := & \hat{q}_j^Tb_i,\quad i = 1, \hdots, N,\quad j = 1, \hdots, N.\label{eq-q-hat_ij}\end{eqnarray}
This allows us to rewrite the formula for $g_{ij}(P)$ as
\begin{eqnarray}g_{ij}(P) = -f(P)\hat{q}_{ij}.\nonumber\end{eqnarray}\end{remark}
\subsection{Finding the Hessian matrix {\em H(P)}}\label{subsec-hessian ipm}
Here, we show that the LU decomposition found in Subsection \ref{subsec-det} can also be used to calculate the Hessian
of $f(P)$ efficiently. Consider $g_{ij}$ and $\hat{q}_{ij}$ as defined in (\ref{eq-gradient_final})
and (\ref{eq-q-hat_ij}) respectively. We prove the following result in this subsection:
\begin{eqnarray}H_{[ij],[k\ell]}(P) := \frac{\partial^2 f(P)}{\partial x_{ij} \partial x_{k\ell}} = g_{kj}\hat{q}_{i\ell} - g_{ij}\hat{q}_{k\ell},\nonumber\end{eqnarray}
where $H_{[ij],[k\ell]}$ is the general element of the Hessian matrix corresponding to arcs $(i,j)$ and
$(k,\ell)  \in \Gamma$.

The outline of the derivation is as follows.
\begin{enumerate}\item[(1)] We represent each element $H_{[ij],[k\ell]}(P)$ of the Hessian matrix as a cofactor of a minor of $A(P)$.
\item[(2)] We construct a second elementary matrix that in conjunction with $E_{ij}$ (see (\ref{eq-E_BF})) transforms matrix $A(P)$ into one with a determinant equivalent to the $(k,\ell)$-th cofactor of $A^{ij}(P)$.
\item[(3)] We then show that the general element of
the Hessian matrix is the product of $\det(A(P))$ and the determinants of the two elementary matrices.
\item[(4)] Using results obtained from finding $g(P)$ in Subsection \ref{subsec-gradient ipm}, we obtain these values immediately.\end{enumerate}
We define $A^{[ij],[k\ell]}$ to be the matrix $A$ with rows $i,k$ and columns $j,\ell$ removed. An argument similar to that for
$g_{ij}(P)$ in the previous subsection can be made that finding
\begin{eqnarray} H_{[ij],[k\ell]}(P) = \displaystyle\frac{\partial^2
f(P)}{\partial x_{ij} \partial x_{k\ell}} = (-1)^{(i+j+k+\ell+1)}\det(A^{[ij],[k\ell]})\mbox{, }i \neq k\mbox{, }j \neq \ell,\label{eq-H}\end{eqnarray}
is equivalent to finding the negative determinant of $A$ with the $i$th and $k$th rows changed to ${e}_j^T$ and ${e}_\ell^T$ respectively. That is,
\begin{eqnarray}\frac{\partial^2 f(P)}{\partial x_{ij} \partial x_{k\ell}} & = &
-\det(\hat{E}_{k\ell}E_{ij}A(P))\nonumber\\
& = & -\det(\hat{E}_{k\ell})\det(E_{ij})\det(A(P)),\label{eq-H2}\end{eqnarray}
where $\hat{E}_{k\ell}$ is an additional row operation constructed to change row $k$ of $E_{ij}A$ into ${e}_\ell^T$. Note
that if $i = k$ or $j = \ell$, the matrix $A^{[ij],[k\ell]}$ is no longer square and the determinant no longer exists.
If this occurs, we define $H_{[ij],[k\ell]} := 0$. If both $i = k$ and $j = \ell$, we also define $H_{[ij],[k\ell]} := 0$, as the determinant is linear in each element of $A(P)$.

Consider $E_{ij}A$ composed of rows $\hat{r}_1^T$, $\hat{r}_2^T$, $\hdots$, $\hat{r}_N^T$. Then, we
perform the following row operation:
\begin{eqnarray}\hat{r}_k & \rightarrow & \gamma_\ell(1)\hat{r}_1 + \gamma_\ell(2)\hat{r}_2 + \hdots + \gamma_\ell(N)\hat{r}_N,\label{eq-row operation for Hessian}\end{eqnarray}
where $(E_{ij}A)^T\gamma_\ell = {e}_\ell.$ Then, similarly to (\ref{eq-alpha}), we directly find $\gamma_\ell$:
\begin{eqnarray}\gamma_\ell & = & (E_{ij}^T)^{-1}(L^T)^{-1}(I - {w}{v}^T)(\overline{U}^T)^{-1}{e}_\ell.\label{eq-gamma_ell}\end{eqnarray}
Next, in a similar fashion to (\ref{eq-E_BF}), we construct an elementary matrix $\hat{E}_{k\ell}$
\begin{eqnarray}\hat{E}_{k\ell} & = & I - {e}_k{e}_k^T +{e}_k\gamma_\ell^T\nonumber\\
& = & I - {e}_k({e}_k^T - \gamma_\ell^T).\label{eq-Ehat}\end{eqnarray}
Then, we evaluate $\det(\hat{E}_{k\ell})$:
\begin{eqnarray}\det(\hat{E}_{k\ell}) & = & 1 - ({e}_k^T - \gamma_\ell^T){e}_k\nonumber\\
& = & 1 - 1 + \gamma_\ell^T{e}_k \nonumber\\ & = & {e}_\ell^T(\bar{U})^{-1}QL^{-1}(E_{ij})^{-1}{e}_k.\label{eq-detEhat}\end{eqnarray}
Recall from (\ref{eq-E_BF}) that $E_{ij} = I - {e}_i({e}_i^T - \alpha_j^T)$. We have
\begin{eqnarray}(E_{ij})^{-1} & = & I + \frac{1}{\alpha_j^T{e}_i}{e}_i({e}_i^T - \alpha_j^T).\label{eq-Einverse}\end{eqnarray}
Recall from (\ref{eq-g2}) that $g_{ij} = \det(A)\det(E_{ij})$, and from (\ref{eq-detE})
that $\alpha_j^T{e}_i = \det(E_{ij}) \neq 0$, and therefore (\ref{eq-Einverse}) holds. Then,
\begin{eqnarray}\alpha_j^T{e}_i & = & \displaystyle\frac{g_{ij}}{\det(A)}.\label{eq-alphaei}\end{eqnarray}
Substituting (\ref{eq-alphaei}) into (\ref{eq-Einverse}) we obtain
\begin{eqnarray}(E_{ij})^{-1} & = & I + \frac{\det(A)}{g_{ij}}{e}_i({e}_i^T - \alpha_j^T),\label{eq-Einverse2}\end{eqnarray}
and further substituting (\ref{eq-Einverse2}) into (\ref{eq-detEhat}), we obtain
\begin{eqnarray}\det(\hat{E}_{k\ell}) & = & {e}_\ell^T\left(\bar{U}\right)^{-1}QL^{-1}\left(I + \frac{\det(A)}{g_{ij}}{e}_i({e}_i^T - \alpha_j^T)\right){e}_k\nonumber\\ & = & {e}_\ell^T(\bar{U})^{-1}QL^{-1}\left({e}_k + \frac{\det(A)}{g_{ij}}{e}_i{e}_i^T{e}_k - \frac{\det(A)}{g_{ij}}{e}_i \alpha_j^T{e}_k\right).\label{eq-detEhat2}\end{eqnarray}
Note that since $i \neq k$, ${e}_i{e}_i^T{e}_k = {0}$, and from (\ref{eq-alphaei}),
$\alpha_j^T{e}_k = \displaystyle\frac{g_{kj}}{\det(A)}$. Hence, from (\ref{eq-detEhat2}) and
(\ref{eq-detE2}) we obtain
\begin{eqnarray}\det(\hat{E}_{k\ell}) & = & {e}_\ell^T(\bar{U})^{-1}QL^{-1}({e}_k
- \frac{g_{kj}}{g_{ij}}{e}_i) \nonumber\\ & = &a_\ell^TQ(b_k - \frac{g_{kj}}{g_{ij}} b_i).\label{eq-detEhat3}\end{eqnarray}
We now derive the main result of this subsection.
\begin{proposition}\label{prop-hessian}The general element of the Hessian of $f(P)$ is given by
\begin{eqnarray}H_{[ij],[k\ell]} & = & g_{kj}\hat{q}_{i\ell} - g_{ij}\hat{q}_{k\ell},\nonumber\end{eqnarray}
where $\hat{q}_{i\ell}$ and $\hat{q}_{k\ell}$ are defined in (\ref{eq-q-hat_ij}).\end{proposition}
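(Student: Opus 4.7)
The plan is to combine the formula (\ref{eq-H2}) for $H_{[ij],[k\ell]}$ as a triple product of determinants with the expressions already derived for each of those factors, then simplify using the shorthand notation $\hat q_{ij} = a_j^T Q b_i$. Since all the heavy lifting has been done in establishing (\ref{eq-H2}) and (\ref{eq-detEhat3}), what remains is essentially an algebraic substitution.

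First I would rewrite $\det(\hat E_{k\ell})$ from (\ref{eq-detEhat3}) by distributing $a_\ell^T Q$ across the bracket, giving
\begin{equation*}
\det(\hat E_{k\ell}) \;=\; a_\ell^T Q b_k \;-\; \frac{g_{kj}}{g_{ij}}\, a_\ell^T Q b_i \;=\; \hat q_{k\ell} \;-\; \frac{g_{kj}}{g_{ij}}\, \hat q_{i\ell},
\end{equation*}
using the definition $\hat q_{ij} = a_j^T Q b_i$ from (\ref{eq-q-hat_ij}). Next I would record $\det(E_{ij}) = a_j^T Q b_i = \hat q_{ij}$ from (\ref{eq-detE2}), and use (\ref{eq-g2}) in the equivalent form $\det(E_{ij}) = g_{ij}/\det(A(P))$.

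Then I would substitute both expressions into (\ref{eq-H2}):
\begin{equation*}
H_{[ij],[k\ell]} \;=\; -\det(\hat E_{k\ell})\det(E_{ij})\det(A(P)) \;=\; -\Bigl(\hat q_{k\ell} - \tfrac{g_{kj}}{g_{ij}} \hat q_{i\ell}\Bigr)\cdot\frac{g_{ij}}{\det(A(P))}\cdot\det(A(P)),
\end{equation*}
and the factor $\det(A(P))$ cancels. Expanding the remaining product and rearranging gives $g_{kj}\hat q_{i\ell} - g_{ij}\hat q_{k\ell}$, which is the claimed expression. Finally I would note that the boundary cases $i=k$ or $j=\ell$ were handled by convention before the derivation, so they require no additional argument.

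The only potential obstacle is a bookkeeping one: the index convention in (\ref{eq-q-hat_ij}) places the $b$-index first and the $a$-index second in $\hat q_{ij} = a_j^T Q b_i$, so one must be careful to identify $a_\ell^T Q b_k$ as $\hat q_{k\ell}$ (not $\hat q_{\ell k}$) and $a_\ell^T Q b_i$ as $\hat q_{i\ell}$. With that identification made consistently, the proof reduces to a one-line substitution, and the division by $g_{ij}$ introduced in forming $(E_{ij})^{-1}$ is cleanly absorbed when it meets the factor $\det(E_{ij}) = g_{ij}/\det(A(P))$.
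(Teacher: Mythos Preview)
Your proposal is correct and follows essentially the same route as the paper: both combine (\ref{eq-H2}) with (\ref{eq-detEhat3}) and the identification $\det(E_{ij})\det(A(P))=g_{ij}$ from (\ref{eq-g2}), then rewrite the resulting expression in terms of $\hat q_{i\ell}$ and $\hat q_{k\ell}$. The only cosmetic difference is that the paper first collapses $\det(E_{ij})\det(A(P))$ into $g_{ij}$ before multiplying, whereas you carry all three factors and cancel $\det(A(P))$ explicitly.
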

\begin{proof}From (\ref{eq-H2}) and (\ref{eq-g2}), we can see that $H_{[ij],[k\ell]} = -\det(\hat{E}_{k\ell})g_{ij}$.
Then, from (\ref{eq-detEhat3}), $\det(\hat{E}_{k\ell}) = a_\ell^TQ(b_k -
\displaystyle\frac{g_{kj}}{g_{ij}}b_i)$ and
so $H_{[ij],[k\ell]} = - a_\ell^TQ(b_kg_{ij} - b_ig_{kj})$.

In order to improve computation time, we take advantage of the fact that we evaluate every $\hat{q}_{ij}$
while calculating the gradient to rewrite the second order partial derivatives of $f(P)$ as
\begin{eqnarray}H_{[ij],[k\ell]} & = & g_{kj}a_\ell^TQb_i - g_{ij}a_\ell^TQb_k \nonumber\\ & = & g_{kj}\hat{q}_{i\ell} - g_{ij}\hat{q}_{k\ell}.\label{eq-H3}\end{eqnarray}
This concludes the proof.\end{proof}
\begin{remark}Note that in practice, we do not calculate some $g_{kj}$'s when calculating $g(P)$ as an arc $(k,j)$
need not exist in the graph. In these cases we find $g_{jk}$ using the gradient formula,
$g_{jk} = -f(P)(\hat{q}_{jk})$, which remains valid despite arc $(k,j)$ not appearing in the graph.\end{remark}
\subsection{Leading principal minor}
It is, perhaps, interesting that instead of using the objective function $f(P) = -\det\left(I - P + \frac{1}{N} {e}{e}^T\right)$, it is also possible to use $f^1(P) := -\det(G^{NN}(P))$, the negative determinant of the
leading principal minor of $I - P$. The following, somewhat surprising, result justifies this claim.

\begin{theorem}\label{thm-lpm}{\begin{enumerate}\item[(1)]$f^1(P) = \displaystyle\frac{1}{N}f(P) = -\displaystyle\frac{1}{N}\det\left(I - P + \displaystyle\frac{1}{N}
{e}{e}^T\right)$.
\item[(2)] If the graph is Hamiltonian, then
\begin{eqnarray}\min\limits_{P \in \mathcal{DS}} f^1(P) = -1.\end{eqnarray}\end{enumerate}}\end{theorem}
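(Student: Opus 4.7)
The plan is to derive part (1) directly from Theorem~\ref{theorem-det} via a single observation about how an LU factorisation restricts to the leading principal submatrix, and then to derive part (2) by combining part (1) with the known global minimum of $f$ on $\mathcal{DS}$ that is recalled earlier in Section~\ref{sec-LU Decomposition}.

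For part (1), I would begin from the factorisation $LU = G = I - P$ that is available throughout the section. The key observation is that because $L$ is lower triangular with unit diagonal and $U$ is upper triangular, the identity $G_{ij} = \sum_{k=1}^{N} L_{ik} U_{kj}$ for indices $i,j \leq N-1$ is supported on $k \leq \min(i,j) \leq N-1$, since $L_{ik}=0$ for $k>i$ and $U_{kj}=0$ for $k>j$. Consequently the leading $(N-1)\times(N-1)$ principal submatrix $G^{NN}$ factorises as $L^{NN} U^{NN}$, where $L^{NN}$ and $U^{NN}$ denote the leading $(N-1)\times(N-1)$ principal submatrices of $L$ and $U$. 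Taking determinants and using the unit diagonal of $L^{NN}$ gives
$$\det(G^{NN}(P)) = \det(L^{NN})\det(U^{NN}) = \prod_{i=1}^{N-1} u_{ii}.$$
Theorem~\ref{theorem-det} asserts $\det(A(P)) = N \prod_{i=1}^{N-1} u_{ii}$, so $\det(G^{NN}(P)) = \tfrac{1}{N}\det(A(P))$, and negating both sides yields $f^1(P) = \tfrac{1}{N} f(P)$, which is part (1).

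For part (2), I would invoke the fact stated earlier that when the graph is Hamiltonian the global minimum of $f(P) = -\det(A(P))$ over $\mathcal{DS}$ is $-N$; as the paper notes, this follows from Theorem~4.1, Proposition~4.4 and Proposition~4.6 of~\cite{detpaper}. Since by part (1) the functions $f^1$ and $f/N$ agree identically on $\mathcal{DS}$, it follows immediately that
$$\min_{P \in \mathcal{DS}} f^1(P) = \tfrac{1}{N}\min_{P \in \mathcal{DS}} f(P) = \tfrac{1}{N}(-N) = -1.$$

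The main step requiring care is the LU-restriction identity underlying part (1); once that is in hand, both parts reduce to rescaling. One minor subtlety worth flagging is that Theorem~\ref{theorem-det} was derived under the standing assumption that $A(P)$ is nonsingular (equivalently that $I-P$ has rank exactly $N-1$). If $I-P$ drops to rank strictly less than $N-1$, at least one of $u_{11},\ldots,u_{N-1,N-1}$ must also vanish in addition to $u_{NN}=0$, so both sides of the identity in part (1) equal zero and the relation holds trivially; this degenerate case is therefore harmless for the minimisation in part (2), which in any event is attained at a Hamiltonian cycle where $A(P)$ is nonsingular.
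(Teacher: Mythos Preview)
Your proof is correct and follows essentially the same route as the paper: both arguments reduce to showing $\det(G^{NN})=\prod_{i=1}^{N-1}u_{ii}$ via the triangular structure of the LU factors and then compare with $\det(A(P))=N\prod_{i=1}^{N-1}u_{ii}$. The only cosmetic difference is that the paper packages the restriction step by defining padded matrices $\hat L,\hat U$ (replacing the last row of $L$ and last column of $U$ by $e_N^T$ and $e_N$) and comparing $\det(\hat U)$ with $\det(\bar U)$, whereas you restrict directly to the $(N{-}1)\times(N{-}1)$ blocks and invoke Theorem~\ref{theorem-det}; the paper's $\hat L,\hat U$ are reused later for the gradient and Hessian formulae, which is the only reason for the extra scaffolding.
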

\begin{proof}First, we show part (1), that is, $f^1(P) = \displaystyle\frac{1}{N}f(P)$. To find $f^1(P)$, we construct $LU = I - P$ as before, and define $\hat{L}$, $\hat{U}$ as:
\begin{eqnarray}\hat{L} = \left[\begin{array}{c}{e}_1^TL \\ \vdots \\ {e}_{N-1}^TL \\ {e}_N^T\end{array}\right],\quad \hat{U} = \left[\begin{array}{cccc} U{e}_1 & \cdots & U{e}_{N-1} &
{e}_N\end{array}\right].\label{eq-LhatUhat}\end{eqnarray}
That is, $\hat{L}$ is
the same as $L$ with the last row replaced by ${e}_N^T$, and $\hat{U}$ is the same as $U$ with the last
column replaced with ${e}_N$. Then consider
\begin{eqnarray}\hat{L}\hat{U} & = & \left[\begin{array}{c}{e}_1^TL\\ \vdots\\ {e}_{N-1}^TL\\ {e}_N^T\end{array}\right]\left[\begin{array}{cccc}U{e}_1 & \cdots & U{e}_{N-1} & {e}_N\end{array}\right]\nonumber\\ & = & \left[\begin{array}{cccc}{e}_1^TLU{e}_1 & \ddots & {e}_1^TLU{e}_{N-1} & {e}_1^TL{e}_N\\ \vdots & \ddots & \vdots & \vdots\\ {e}_{N-1}^TLU{e}_1 & \cdots
& {e}_{N-1}^TLU{e}_{N-1} & {e}_{N-1}^TL{e}_N\\ {e}_N^TU{e}_1 & \cdots & {e}_N^TU{e}_{N-1} & {e}_N^T{e}_N\end{array}\right].\nonumber\end{eqnarray}
Since $L$ is lower-triangular, ${e}_i^TL{e}_N = 0$ for all $i \neq N$. Likewise, since $U$ is
upper-triangular, ${e}_N^TU{e}_j = 0$ for all $j \neq N$. Therefore the above matrix simplifies to
\begin{eqnarray}\hat{L}\hat{U} & = & \left[\begin{array}{cccc}{e}_1^TLU{e}_1 & \cdots &
{e}_1^TLU{e}_{N-1} & 0\\ \vdots & \ddots & \vdots & \vdots\\ {e}_{N-1}^TLU{e}_1 & \cdots & {e}_{N-1}^TLU{e}_{N-1} & 0\\ 0 & \cdots & 0 & 1\end{array}\right],\nonumber\end{eqnarray}
which is the same as $LU$ with the bottom row and rightmost column removed, and a 1 placed in the bottom-right element.
Therefore, $\det(\hat{L}\hat{U}) = \det(G^{NN}(P))$, and consequently
\begin{eqnarray}f^1(P) & = &
-\det(\hat{L})\det({\hat{U}}).\label{eq-theta1}\end{eqnarray}
Note that $\hat{L}$ and $\hat{U}$ are triangular matrices, so\\
$$\det\left(\hat{L}\right) = \prod_{i = 1}^N
\hat{l}_{ii}\mbox{,\hspace*{0.7cm} and \hspace*{0.7cm}}\det\left(\hat{U}\right) = \prod_{i = 1}^N \hat{u}_{ii}.$$
However, only the last diagonal elements of $\hat{L}$ and $\hat{U}$ are different from $L$ and $\bar{U}$ (see (\ref{eq-lu2})) respectively, so
\begin{eqnarray}\det\left(\hat{L}\right) = \hat{l}_{NN}\prod_{i = 1}^{N-1}
l_{ii}\mbox{,\hspace*{0.7cm} and \hspace*{0.7cm}}\det\left(\hat{U}\right) = \hat{u}_{NN}\prod_{i = 1}^{N-1} \bar{u}_{ii}.\label{eq-detLhatdetUhat}\end{eqnarray}
Now, since $\hat{l}_{NN} = l_{NN} = 1$, we have
\begin{eqnarray}\det\left(\hat{L}\right) = \det\left(L\right) =
1.\label{eq-detLhat}\end{eqnarray}
We also have $\hat{u}_{NN} = 1$, but by Lemma \ref{lem-Ubar}, $\bar{u}_{NN} = N$ and hence
\begin{eqnarray}\det\left(\hat{U}\right) = \frac{1}{N}
\det\left(\bar{U}\right).\label{eq-detUhat}\end{eqnarray}
Therefore, substituting (\ref{eq-detLhat}) and (\ref{eq-detUhat}) into (\ref{eq-theta1}) we obtain
\begin{eqnarray*}f^1(P) & = &
-\det\left(\hat{L}\right)\det\left(\hat{U}\right)\\ & = & -\frac{1}{N}\det\left(\bar{U}\right)\\ & = &
-\frac{1}{N}\det\left(I - P + \frac{1}{N} {e}{e}^T\right) = \frac{1}{N}f(P).\end{eqnarray*}
Therefore, part (1) is proved.

The proof of part $(2)$ of Theorem \ref{thm-lpm} follows directly from the fact that
$\min f(P) = -N$ (proved in \cite{detpaper}),
and part $(1)$.\end{proof}
\vspace*{-0.4cm}\begin{remark}Using the leading principal minor has the advantage that the rank-one modification
$\frac{1}{N}{e}{e}^T$ is not required, which makes calculating the gradient and the Hessian even simpler than described in Subsection \ref{subsec-gradient ipm} and Subsection
\ref{subsec-hessian ipm} respectively. The derivation of the gradient and Hessian formulae for the negative determinant of the leading principal
minor follows the same process as that for the determinant function, except that the matrix $Q = I - {v}{w}^T$
is not required. \end{remark}
The formulae for $f^1(P)$, $g^1(P)$ and $H^1(P)$ then reduce to
\begin{eqnarray}f^1 & = & -\prod_{i = 1}^{N-1} u_{ii},\label{eq-d1}\\
g^1_{ij} & = & -f^1(P)(a_j^1)^Tb_i^1,\label{eq-g1}\\ H^1_{[i,j],[k,\ell]} & = & g^1_{kj}(a_\ell^1)^Tb_i^1 -
g^1_{ij}(a_\ell^1)^Tb_k^1,\label{eq-H1}\end{eqnarray}
where
\begin{eqnarray} \hat{L}b_i^1 & = & {e}_i,\label{eq-Lhat}\\
\hat{U}^Ta_j^1 & = & {e}_j.\label{eq-Uhat}\end{eqnarray}
\vspace*{-1.2cm}\begin{remark}In practice, the determinant of the leading principal minor is used rather than
that of the whole matrix. It is simpler, more efficient and the optimal value is independent of the
graph. It eliminates the need to scale any parameters by the size of the graph. When $f^1(P)$ is used in lieu of $f(P)$ the corresponding gradient vector and Hessian matrix are denoted by $g^{1}(P)$ and $H^{1}(P)$, respectively.\end{remark}
\vspace*{-0.8cm}
\section{LU decomposition-based evaluation algorithm}\label{sec-lu decomposition algorithm}
\vspace*{-0.35cm}

The algorithm for computing $f^1(P)$, $g^1(P)$, $H^1(P)$ is given here, along with the complexity of each step of the algorithm. Let $k$ denote the average degree of the graph, that is, there are $kN$ edges.
\vspace*{-0.5cm}
{\scriptsize\begin{center}\begin{tabular}{|llc|}\hline
{\bf Input}: $P$& &\\
{\bf Output}: $f^1(P), g^1(P), H^1(P)$& &\\
& &\\
{\bf begin}& & {\bf \underline{Complexity}}\\
\hspace*{1cm}1) Perform LU decomposition to find $LU = I - P$. & & $O(kN^2)$\\
& &\\
\hspace*{1cm}2) Calculate $\hat{L}$ and $\hat{U}$, using (\ref{eq-LhatUhat}).& & $O(N)$\\
& &\\
\hspace*{1cm}3) Calculate each $(a_j^1)^T$ and $b_i^1$, using (\ref{eq-Lhat}) and (\ref{eq-Uhat}).& & $O(N^3)$\\
& &\\
\hspace*{1cm}4) Calculate each $(a_j^1)^Tb_i^1$.& & $O(N^3)$\\
& &\\
\hspace*{1cm}5) Calculate $f^1(P) = -\prod_{i=1}^{N-1} u_{ii}$.& & $O(N)$\\
& &\\
\hspace*{1cm}6) Calculate each $g_{ij}^1(P) = -f^1(P)(a_j^1)^Tb_i^1$.& & $O(kN)$\\
& &\\
\hspace*{1cm}7) Calculate each $H^1_{[ij],[k\ell]}(P) = \left\{\begin{array}{ccl}g_{kj}^1(a_\ell^1)^Tb_i^1 - g_{ij}^1(a_\ell^1)^Tb_k^1, &  & i \neq k\mbox{ and } j \neq \ell\mbox{ and } i,j,k,l \neq N\\
0, &  & \mbox{otherwise.}\end{array}\right.$& & $O(k^2N^2)$\\
{end}& &\\
\hline\end{tabular}
Function evaluations algorithm
\end{center}}
\vspace*{-0.5cm}
If the graph is sparse, the complexity of the above algorithm is $O(N^3)$. However, for sufficiently dense graphs (that is, $k > \sqrt{N}$) the complexity of the above algorithm is $O(k^2N^2)$. Note that each element of the Hessian is calculated in $O(1)$ time, because they simply involve scalar multiplication
where all of the scalars have already been calculated in earlier steps, that is, the gradient terms in step 6, and
each $\left(a_i^1\right)^Tb_l^1$ in step 4.

These bounds are considerably better than the $O(k^3N^4)$ bound that applies if we simply perform an LU decomposition for each element in the Hessian and gradient.
\newpage
\begin{example}Consider the following six-node cubic graph $\Gamma_6$.
\vspace*{0.1cm}

\begin{figure}[h]\begin{center}\includegraphics[scale=0.8]{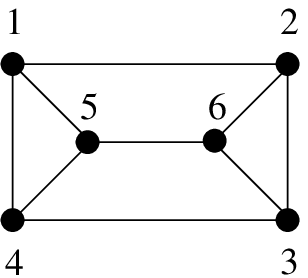}
\end{center}\label{fig-envelope}\end{figure}

The adjacency matrix of $\Gamma_6$ is
$${\scriptsize\left[\begin{array}{cccccc} 0 & 1 & 0 & 1 & 1 & 0\\
1 & 0 & 1 & 0 & 0 & 1\\ 0 & 1 & 0 & 1 & 0 & 1\\ 1 & 0 & 1 & 0 & 1 & 0\\ 1 & 0 & 0 & 1 & 0 & 1\\ 0 & 1 & 1 & 0 & 1 &
0\end{array}\right]}.$$

Consider a point ${x}$ such that,\\
$$P({x}) = {\scriptsize\left[\begin{array}{cccccc}0 & \frac{2}{3} & 0 & \frac{1}{6} & \frac{1}{6} & 0\\ \frac{2}{3} & 0 &
\frac{1}{6} & 0 & 0 & \frac{1}{6}\\ 0 & \frac{1}{6} & 0 & \frac{2}{3} & 0 & \frac{1}{6}\\ \frac{1}{6} & 0 & \frac{2}{3}
& 0 & \frac{1}{6} & 0\\ \frac{1}{6} & 0 & 0 & \frac{1}{6} & 0 & \frac{2}{3}\\ 0 & \frac{1}{6} & \frac{1}{6} & 0 &
\frac{2}{3} & 0\end{array}\right]}.$$\\
\\
Performing the LU decomposition of $I - P$ using MATLAB's {\texttt{lu}} routine we obtain matrices $L$ and $U$ (given to four decimal places)
$$\hspace*{-0.45cm}L = {\scriptsize\left[\begin{array}{rrrrrr}1 & 0 & 0 & 0
& 0 & 0\\ -0.6667 & 1 & 0 & 0 & 0 & 0\\ 0 & -0.3000 & 1 & 0 & 0 & 0\\ -0.1667 & -0.2000 & -0.7368 & 1 & 0 &
\,\quad \;\;\;\;\;0\\ -0.1667 & -0.2000 & -0.0351 & -0.5556 & 1 & 0\\ 0 & -0.3000 & -0.2281 & -0.4444 & -1.0000 &
1\end{array}\right],} \hspace*{0.25cm}
U = {\scriptsize\left[\begin{array}{rrrrrr} 1 & -0.6667 & 0 & -0.1667 & -0.1667 & 0\\ \,\quad \;\;\;\;\;0 &
0.5556 & -0.1667 & -0.1111 & -0.1111 & -0.1667\\ 0 & 0 & 0.9500 & -0.7000 & -0.0333 & -0.2167\\ 0 & 0 & 0 & 0.4342 &
-0.2412 & -0.1930\\ 0 & 0 & 0 & 0 & 0.8148 & -0.8148\\ 0 & 0 & 0 & 0 & 0 & 0\end{array}\right].}$$
%\begin{eqnarray*}L & = & {\scriptsize\left[\begin{array}{rrrrrr}1 & 0 & 0 & 0
%& 0 & 0\\ -0.6667 & 1 & 0 & 0 & 0 & 0\\ 0 & -0.3000 & 1 & 0 & 0 & 0\\ -0.1667 & -0.2000 & -0.7368 & 1 & 0 &
%\,\quad \;\;\;\;\;0\\ -0.1667 & -0.2000 & -0.0351 & -0.5556 & 1 & 0\\ 0 & -0.3000 & -0.2281 & -0.4444 & -1.0000 &
%1\end{array}\right],}\\
%\\
%\\
%U & = & {\scriptsize\left[\begin{array}{rrrrrr} 1 & -0.6667 & 0 & -0.1667 & -0.1667 & 0\\ \,\quad \;\;\;\;\;0 &
%0.5556 & -0.1667 & -0.1111 & -0.1111 & -0.1667\\ 0 & 0 & 0.9500 & -0.7000 & -0.0333 & -0.2167\\ 0 & 0 & 0 & 0.4342 &
%-0.2412 & -0.1930\\ 0 & 0 & 0 & 0 & 0.8148 & -0.8148\\ 0 & 0 & 0 & 0 & 0 & 0\end{array}\right].}\end{eqnarray*}
%\\
Consequently, $\hat{L} = \left[\begin{array}{cccc}L^T{e}_1 & \cdots & L^T{e}_{N-1} & {e}_N\end{array}\right]^T$ and
$\hat{U} = \left[\begin{array}{cccc} U{e}_1 & \cdots & U{e}_{N-1} & {e}_N\end{array}\right]$ are simply

\vspace*{-0.65cm}$$\hspace*{-0.45cm}\hat{L} = {\scriptsize\left[\begin{array}{rrrrrr}1 & 0 & 0 & 0
& 0 & 0\\ -0.6667 & 1 & 0 & 0 & 0 & 0\\ 0 & -0.3000 & 1 & 0 & 0 & 0\\ -0.1667 & -0.2000 & -0.7368 & 1 & 0 &
\,\quad \;\;\;\;\;0\\ -0.1667 & -0.2000 & -0.0351 & -0.5556 & \,\quad \;\;\;\;\;1 & 0\\ 0 & 0 & 0 & 0 & 0 &
1\end{array}\right],} \hspace*{0.25cm} \hat{U} = {\scriptsize\left[\begin{array}{rrrrrr} 1 & -0.6667 & 0 & -0.1667 & -0.1667 & 0\\ \,\quad \;\;\;\;\;0
& 0.5556 & -0.1667 & -0.1111 & -0.1111 & 0\\ 0 & 0 & 0.9500 & -0.7000 & -0.0333 & 0\\ 0 & 0 & 0 & 0.4342 & -0.2412 & 0\\
0 & 0 & 0 & 0 & 0.8148 & 0\\ 0 & 0 & 0 & 0 & 0 & \,\quad \;\;\;\;\;1\end{array}\right].}$$
%\begin{eqnarray*}\hat{L} & = & {\scriptsize\left[\begin{array}{rrrrrr}1 & 0 & 0 & 0
%& 0 & 0\\ -0.6667 & 1 & 0 & 0 & 0 & 0\\ 0 & -0.3000 & 1 & 0 & 0 & 0\\ -0.1667 & -0.2000 & -0.7368 & 1 & 0 &
%\,\quad \;\;\;\;\;0\\ -0.1667 & -0.2000 & -0.0351 & -0.5556 & \,\quad \;\;\;\;\;1 & 0\\ 0 & 0 & 0 & 0 & 0 &
%1\end{array}\right],}\\
%\\
%\\
%\hat{U} & = & {\scriptsize\left[\begin{array}{rrrrrr} 1 & -0.6667 & 0 & -0.1667 & -0.1667 & 0\\ \,\quad \;\;\;\;\;0
%& 0.5556 & -0.1667 & -0.1111 & -0.1111 & 0\\ 0 & 0 & 0.9500 & -0.7000 & -0.0333 & 0\\ 0 & 0 & 0 & 0.4342 & -0.2412 & 0\\
%0 & 0 & 0 & 0 & 0.8148 & 0\\ 0 & 0 & 0 & 0 & 0 & \,\quad \;\;\;\;\;1\end{array}\right].}\end{eqnarray*}
For all $i,j$, we calculate the $a_j^1$ and $b_i^1$ vectors using (\ref{eq-Lhat}) and (\ref{eq-Uhat}). Namely,

\vspace*{-0.45cm}$$a_1^1 = {\scriptsize\left[\begin{array}{c}1\\
1.2\\ 0.2105\\ 1.0303\\ 0.6818\\ 0\end{array}\right]}, a_2^1 = {\scriptsize\left[\begin{array}{c}0\\ 1.8\\
0.3158\\ 0.9697\\ 0.5455\\ 0\end{array}\right]}, a_3^1 = {\scriptsize\left[\begin{array}{c}0\\ 0\\ 1.0526\\ 1.6970\\ 0.5455\\ 0\end{array}\right]}, a_4^1 = {\scriptsize\left[\begin{array}{c}0\\ 0\\ 0\\ 2.3030\\ 0.6818\\
0\end{array}\right]}, a_5^1 = {\scriptsize\left[\begin{array}{c}0\\ 0\\ 0\\ 0\\ 1.2273\\
0\end{array}\right]}, a_6^1 = {\scriptsize\left[\begin{array}{c}0\\ 0\\ 0\\ 0\\ 0\\ 1\end{array}\right]},$$
$$b_1^1 = {\scriptsize\left[\begin{array}{c}1\\ 0.6667\\ 0.2\\ 0.4474\\ 0.5556\\ 0\end{array}\right]}, b_2^1 =
{\scriptsize\left[\begin{array}{c}0\\ 1\\ 0.3\\ 0.4211\\ 0.4444\\ 0\end{array}\right]}, b_3^1 =
{\scriptsize\left[\begin{array}{c}0\\ 0\\ 1\\ 0.7368\\ 0.4444\\ 0\end{array}\right]}, b_4^1 =
{\scriptsize\left[\begin{array}{c}0\\ 0\\ 0\\ 1\\ 0.5556\\ 0\end{array}\right]}, b_5^1 =
{\scriptsize\left[\begin{array}{c}0\\ 0\\ 0\\ 0\\ 1\\ 0\end{array}\right]}, b_6^1 =
{\scriptsize\left[\begin{array}{c}0\\ 0\\ 0\\ 0\\ 0\\ 1\end{array}\right]}. $$
We can now represent each $(a_j^1)^Tb_i^1$ as the $ij$-th element of the matrix
$$\left[(a_j^1)^T(b_i^1)\right]_{i,j=1}^{N,N} = {\scriptsize\left[\begin{array}{cccccc}\;\;2.6818\;\; & 2 & \;\;1.2727\;\; & \;\;1.4091\;\; & \;\;0.6818\;\; & \;0\;\\
2 & \;\;2.5455\;\; & 1.2727 & 1.2727 & 0.5455 & \;0\;\\
1.2727 & 1.2727 & 2.5455 & 2 & 0.5455 & 0\\
1.4091 & 1.2727 & 2 & 2.6818 & 0.6818 & 0\\
0.6818 & 0.5455 & 0.5455 & 0.6818 & 1.2273 & 0\\
0 & 0 & 0 & 0 & 0 & 1\end{array}\right].}$$
Then, $f^1(P) = \prod\limits_{i=1}^{N-1} \hat{u}_{ii} \approx -0.1867$. Note that we can directly verify the preceding by confirming that $\det(A(P)) \approx 1.1204 = -6(f^1(P))$.

The gradient vector is then found using (\ref{eq-g1}). Note that we are only interested in the gradient elements for the eighteen arcs in the graph; this yields, to three decimal places:
$$g^1(P) \approx
{\scriptsize\left[\begin{array}{cccccccccccccccccc}\;0.374\; & \;0.263\; & \;0.127\; & \;0.374\; & \;0.238\; & \;0\; & \;0.238\; & \;0.374\; & \;0\; & \;0.263\; & \;0.374\; & \;0.127\; & \;0.127\; & \;0.127\; & \;0\; & \;0\; & \;0\; & \;0\;\end{array}\right]}.$$
Finally, the Hessian is found using (\ref{eq-H1}), given here to two decimal places:
$$H^1(P) \approx {\scriptsize\left[\begin{array}{cccccccccccccccccc}0 & 0 & 0 & 0.53 & 0.13 & 0 & 0 & -0.41 & 0 & 0.11 & -0.44 & -0.09 & 0.02 & -0.11 & 0 & 0 & 0 & 0 \\
0 & 0 & 0 & 0.11 & -0.03 & 0 & 0.41 & 0 & 0 & 0.97 & 0.11 & 0.16 & 0.16 & 0 & 0 & 0 & 0 & 0 \\
0 & 0 & 0 & 0.02 & -0.03 & 0 & 0.04 & -0.11 & 0 & 0.16 & -0.09 & 0 & 0.53 & 0.24 & 0 & 0 & 0 & 0 \\
0.53 & 0.11 & 0.02 & 0 & 0 & 0 & 0.13 & -0.44 & 0 & 0 & -0.41 & -0.11 & 0 & -0.09 & 0 & 0 & 0 & 0 \\
0.13 & -0.03 & -0.03 & 0 & 0 & 0 & 0.91 & 0.13 & 0 & 0.41 & 0 & 0.04 & 0.04 & -0.03 & 0 & 0 & 0 & 0 \\
0 & 0 & 0 & 0 & 0 & 0 & 0 & 0 & 0 & 0 & 0 & 0 & 0 & 0 & 0 & 0 & 0 & 0 \\
0 & 0.41 & 0.04 & 0.13 & 0.91 & 0 & 0 & 0 & 0 & -0.03 & 0.13 & -0.03 & -0.03 & 0.04 & 0 & 0 & 0 & 0 \\
-0.41 & 0 & -0.11 & -0.44 & 0.13 & 0 & 0 & 0 & 0 & 0.11 & 0.53 & 0.02 & -0.09 & 0 & 0 & 0 & 0 & 0 \\
0 & 0 & 0 & 0 & 0 & 0 & 0 & 0 & 0 & 0 & 0 & 0 & 0 & 0 & 0 & 0 & 0 & 0 \\
0.11 & 0.97 & 0.16 & 0 & 0.41 & 0 & -0.03 & 0.11 & 0 & 0 & 0 & 0 & 0 & 0.16 & 0 & 0 & 0 & 0 \\
-0.44 & 0.11 & -0.09 & -0.41 & 0 & 0 & 0.13 & 0.53 & 0 & 0 & 0 & 0 & -0.11 & 0.02 & 0 & 0 & 0 & 0 \\
-0.09 & 0.16 & 0 & -0.11 & 0.04 & 0 & -0.03 & 0.02 & 0 & 0 & 0 & 0 & 0.24 & 0.53 & 0 & 0 & 0 & 0  \\
0.02 & 0.16 & 0.53 & 0 & 0.04 & 0 & -0.03 & -0.09 & 0 & 0 & -0.11 & 0.24 & 0 & 0 & 0 & 0 & 0 & 0  \\
-0.11 & 0 & 0.24 & -0.09 & -0.03 & 0 & 0.04 & 0 & 0 & 0.16 & 0.02 & 0.53 & 0 & 0 & 0 & 0 & 0 & 0 \\
0 & 0 & 0 & 0 & 0 & 0 & 0 & 0 & 0 & 0 & 0 & 0 & 0 & 0 & 0 & 0 & 0 & 0  \\
0 & 0 & 0 & 0 & 0 & 0 & 0 & 0 & 0 & 0 & 0 & 0 & 0 & 0 & 0 & 0 & 0 & 0  \\
0 & 0 & 0 & 0 & 0 & 0 & 0 & 0 & 0 & 0 & 0 & 0 & 0 & 0 & 0 & 0 & 0 & 0  \\
0 & 0 & 0 & 0 & 0 & 0 & 0 & 0 & 0 & 0 & 0 & 0 & 0 & 0 & 0 & 0 & 0 & 0 \end{array}\right].}$$

\end{example}

\vspace*{-0.5cm}\begin{acknowledgements}
Support for this work was provided by Australian Research Council (DP0666632 and DP0984470), the Office of Naval Research (Grant: N00014-02-1-0076) and the Army (Grant:  W911NF-07-2-0027-1). We would also like to thank V. Ejov for useful discussions and the referees whose comments corrected some errors and prompted us to improve the presentation.
\end{acknowledgements}

\end{document}